\documentclass{amsart}
\usepackage[mathscr]{eucal}

\usepackage{hyperref}

\allowdisplaybreaks[1]




\urlstyle{sf}
\newtheorem{thrm}{Theorem}[section]
\newtheorem{lem}[thrm]{Lemma}

\newtheorem{cor}[thrm]{Corollary}
\theoremstyle{definition}
\newtheorem{definition}[thrm]{Definition}

\numberwithin{equation}{section}

\newtheorem{example}[thrm]{Example}
\newtheorem{problem}[thrm]{Problem}

\newcommand{\labeq}[1]{\label{eq:#1}}
\newcommand{\refeq}[1]{(\ref{eq:#1})}
\newcommand{\labt}[1]{\label{thm:#1}}
\newcommand{\reft}[1]{Theorem~\ref{thm:#1}}

\newcommand{\labl}[1]{\label{lemma:#1}}
\newcommand{\refl}[1]{Lemma~\ref{lemma:#1}}
\newcommand{\labd}[1]{\label{definition:#1}}

\newcommand{\formalsum}{\sum_{n=1}^{\infty} \frac {E_n} {q_1 q_2 \ldots q_n}}

\newcommand{\dimh}[1]{\hbox{$\dim_{\hbox{H}}$}\left( #1\right)}

\newcommand{\q}[1]{q_1 \cdots q_{ #1 }}

\newcommand{\NN}{\mathbb{N}_2^{\mathbb{N}}}
\newcommand{\NNC}{\NN \times \NN}

\newcommand{\wrt}[1]{\hbox{ w.r.t. }#1}

\newcommand{\ppq}{\psi_{P,Q}}

\newcommand{\floor}[1]{\left\lfloor #1 \right\rfloor} 
 
\newcommand{\br}[1]{\left\{ #1 \right\}}

\newcommand{\pr}[1]{\left( #1 \right)}

\newcommand{\NQ}{\mathscr{N}(Q)}
\newcommand{\N}[1]{\mathscr{N}( #1 )}
\newcommand{\Nk}[2]{\mathscr{N}_{#2}( #1 )} 
\newcommand{\DNQ}{\mathscr{DN}(Q)}
\newcommand{\DN}[1]{\mathscr{DN}( #1 )} 
\newcommand{\RNQ}{\mathscr{RN}(Q)}

\newcommand{\RDN}{\RNQ \cap \DNQ \backslash \NQ}

\newcommand{\blank}[1]{ }

\thanks{Research of the authors is partially supported by the U.S. NSF grant DMS-0943870.  We would like to thank Samuel Roth for posing the problem that led to \reft{main} to the second author  at the 2012 RTG conference: Logic, Dynamics and Their Interactions, with a Celebration of the Work of Dan Mauldin in Denton, Texas.  He asked if it is true that $x \in \NQ \cap \DNQ$ implies that $nx \in \NQ$ for all natural numbers $n$.  We thank Martin Sleziak for pointing us in a direction that led to the paper \cite{RajagopalWeightedDensity}. This paper helped us prove a stronger version of \reft{main}.}

\author[D. Airey]{Dylan Airey}
\address[D. Airey]{
Department of Mathematics, University of Texas at Austin, 2515 Speedway, Austin, TX 78712-1202, USA}
\email{dylan.airey@utexas.edu}

\author[B. Mance]{Bill Mance}
\address[B. Mance]{Department of Mathematics, University of North Texas, General Academics Building 435, 1155 Union Circle,  \#311430, Denton, TX 76203-5017, USA}
\email{mance@unt.edu}

\begin{document}

\title[Normality preserving operations part I]{Normality preserving operations for Cantor series expansions and associated fractals part I}

\begin{abstract}
It is well known that rational multiplication preserves normality in base $b$.  We study related normality preserving operations for the $Q$-Cantor series expansions.  In particular, we show that while integer multiplication preserves $Q$-distribution normality, it fails to preserve $Q$-normality in a particularly strong manner.  We also show that $Q$-distribution normality is not preserved by non-integer rational multiplication on a set of zero measure and full Hausdorff dimension.
\end{abstract}

\maketitle

\section{Introduction}
Let $\N{b}$ be the set of numbers normal in base $b$ and let $f$ be a  function from $\mathbb{R}$ to $\mathbb{R}$.  We say that $f$ {\it preserves $b$-normality} if $f(\N{b}) \subseteq \N{b}$.  We can make a similar definition for preserving normality with respect to the regular continued fraction expansion, $\beta$-expansions, the L\"uroth series expansion, etc.

Several authors have studied $b$-normality preserving functions.  Some $b$-normality preserving functions naturally arise in  H. Furstenberg's work on disjointness in ergodic theory\cite{FurstenbergDisjoint}.  V. N. Agafonov \cite{AgafonovNormal}, T. Kamae \cite{Kamae}, T. Kamae and B. Weiss \cite{KamaeWeiss}, and W. Merkle and J. Reimann \cite{MerkleReimann} studied $b$-normality preserving selection rules.

For a real number $r$, define real functions $\pi_r$ and $\sigma_r$ by $\pi_r(x)=rx$ and $\sigma_r(x)=r+x$.  In 1949 D. D. Wall proved in his Ph.D. thesis \cite{Wall} that for non-zero rational $r$ the function $\pi_r$ is $b$-normality preserving for all $b$ and that the function $\sigma_r$ is $b$-normality preserving functions for all $b$ whenever $r$ is rational.  These results were also independently proven by K. T. Chang in 1976 \cite{ChangNormal}.  D. D. Wall's method relies on the well known characterization that a real number $x$ is normal in base $b$ if and only if the sequence $(b^nx)$ is uniformly distributed mod $1$ that he also proved in his Ph.D. thesis.

D. Doty, J. H. Lutz, and S. Nandakumar took a substantially different approach from D. D. Wall and strengthened his result.  They proved in \cite{LutzNormalityPreserves} that for every real number $x$ and every non-zero rational number $r$  the $b$-ary expansions of $x, \pi_r(x),$ and $\sigma_r(x)$ all have the same finite-state dimension and the same finite-state strong dimension.  It follows that $\pi_r$ and $\sigma_r$ preserve $b$-normality.  It should be noted that their proof uses different methods from those used by D. D. Wall and is unlikely to be proven using similar machinery.

C. Aistleitner generalized D. D. Wall's result on $\sigma_r$.  Suppose that $q$ is a rational number and that the digits of the $b$-ary expansion of $z$ are non-zero on a set of indices of density zero.  In \cite{AistleitnerNormalPreserves} he proved that the function $\sigma_{qz}$ is $b$-normality preserving.  We will show as a consequence of \reft{densityzerochange} that C. Aistleitner's result does not generalize to at least one notion of normality for some of the Cantor series expansions.

There are still many open questions relating to the functions $\pi_r$ and $\sigma_r$.  For example, M. Mend\'{e}s France asked in \cite{X} if the function $\pi_r$ preserves simple normality with respect to the regular continued fraction for every non-zero rational $r$.  The authors are unaware of any theorems that state that either $\pi_r$ or $\sigma_r$ preserve any other form of normality than $b$-normality.

We will focus on the normality preserving properties of $\pi_r$ for the $Q$-Cantor series expansion as well as two other related functions.  We will show that while $\pi_r$ is $Q$-distribution normality preserving for every non-zero integer $r$, the set of $x$ where $\pi_r(x)$ is not $Q$-distribution normal has full Hausdorff dimension whenever $r \in \mathbb{Q} \backslash \mathbb{Z}$ and $Q$ is infinite in limit. Our main theorem will show that the function $\pi_r$ is so far from preserving $Q$-normality that there exist basic sequences $Q$ and real numbers $x$ that are $Q$-normal and $Q$-distribution normal where $\pi_r(x)$ is not $Q$-normal for every integer $r \geq 2$.  In the sequel to this paper \cite{AireyManceVandehey}, the authors and J. Vandehey prove that for a class of basic sequences $Q$, the set of real numbers $x$ where $\pi_r(x)$ is $Q$-normal for all non-zero rationals $r$ but where $x$ is not $Q$-distribution normal has full Hausdorff dimension.


\section{Cantor series expansions}

The study of normal numbers and other statistical properties of real numbers with respect to large classes of Cantor series expansions was  first done by P. Erd\H{o}s and A. R\'{e}nyi in \cite{ErdosRenyiConvergent} and \cite{ErdosRenyiFurther} and by A. R\'{e}nyi in \cite{RenyiProbability}, \cite{Renyi}, and \cite{RenyiSurvey} and by P. Tur\'{a}n in \cite{Turan}.

The $Q$-Cantor series expansions, first studied by G. Cantor in \cite{Cantor},
are a natural generalization of the $b$-ary expansions.\footnote{G. Cantor's motivation to study the Cantor series expansions was to extend the well known proof of the irrationality of the number $e=\sum 1/n!$ to a larger class of numbers.  Results along these lines may be found in the monograph of J. Galambos \cite{Galambos}. } 
Let $\mathbb{N}_k:=\mathbb{Z} \cap [k,\infty)$.  If $Q \in \NN$, then we say that $Q$ is a {\it basic sequence}.
Given a basic sequence $Q=(q_n)_{n=1}^{\infty}$, the {\it $Q$-Cantor series expansion} of a real number $x$  is the (unique)\footnote{Uniqueness can be proven in the same way as for the $b$-ary expansions.} expansion of the form
\begin{equation} \labeq{cseries}
x=E_0+\sum_{n=1}^{\infty} \frac {E_n} {q_1 q_2 \cdots q_n}
\end{equation}
where $E_0=\floor{x}$ and $E_n$ is in $\{0,1,\ldots,q_n-1\}$ for $n\geq 1$ with $E_n \neq q_n-1$ infinitely often. We abbreviate \refeq{cseries} with the notation $x=E_0.E_1E_2E_3\ldots$ w.r.t. $Q$.

A {\it block} is an ordered tuple of non-negative integers, a {\it block of length $k$} is an ordered $k$-tuple of integers, and {\it block of length $k$ in base $b$} is an ordered $k$-tuple of integers in $\{0,1,\ldots,b-1\}$.

Let
$$
Q_n^{(k)}:=\sum_{j=1}^n \frac {1} {q_j q_{j+1} \cdots q_{j+k-1}} \hbox{ and }  T_{Q,n}(x):=\left(\prod_{j=1}^n q_j\right) x \pmod{1}.
$$
A. R\'enyi \cite{Renyi} defined a real number $x$ to be {\it normal} with respect to $Q$ if for all blocks $B$ of length $1$,
\begin{equation}\labeq{rnormal}
\lim_{n \rightarrow \infty} \frac {N_n^Q (B,x)} {Q_n^{(1)}}=1.
\end{equation}
If $q_n=b$ for all $n$ and we restrict $B$ to consist of only digits less than $b$, then \refeq{rnormal} is equivalent to {\it simple normality in base $b$}, but not equivalent to {\it normality in base $b$}. 
A basic sequence $Q$ is {\it $k$-divergent} if
$\lim_{n \rightarrow \infty} Q_n^{(k)}=\infty$ and {\it fully divergent} if $Q$ is $k$-divergent for all $k$. 
A basic sequence $Q$ is {\it infinite in limit} if $q_n \rightarrow \infty$.

\begin{definition}\labd{1.7} A real number $x$  is {\it $Q$-normal of order $k$} if for all blocks $B$ of length $k$,
$$
\lim_{n \rightarrow \infty} \frac {N_n^Q (B,x)} {Q_n^{(k)}}=1.
$$
We let $\Nk{Q}{k}$ be the set of numbers that are $Q$-normal of order $k$.  The real number $x$ is {\it $Q$-normal} if
$x \in \NQ := \bigcap_{k=1}^{\infty} \Nk{Q}{k}.$
A real number~$x$ is {\it $Q$-distribution normal} if
the sequence $(T_{Q,n}(x))_{n=0}^\infty$ is uniformly distributed mod $1$.  Let $\DNQ$ be the set of $Q$-distribution normal numbers.
\end{definition}

It  follows from a well known result of H. Weyl \cite{Weyl2,Weyl4} that $\DNQ$ is a set of full Lebesgue measure for every basic sequence $Q$. We will need the following result of the second author \cite{Mance4} later in this paper.

\begin{thrm}\labt{measure}\footnote{Early work in this direction has been done by A. R\'enyi \cite{Renyi}, T.  \u{S}al\'at \cite{Salat4}, and F. Schweiger~\cite{SchweigerCantor}.}
Suppose that $Q$ is infinite in limit.  Then $\Nk{Q}{k}$ (resp. $\NQ$) is of full measure if and only if $Q$ is $k$-divergent (resp. fully divergent). 
\end{thrm}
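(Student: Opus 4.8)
The plan is to exploit that, under Lebesgue measure $\lambda$ on $[0,1)$, the Cantor digits $E_1, E_2, \ldots$ are independent, with $E_j$ uniform on $\{0, 1, \ldots, q_j - 1\}$. Fix a block $B = (b_1, \ldots, b_k)$ of length $k$ and let $\mathbf{1}_j^B(x)$ be the indicator of the event that $B$ occurs in the expansion of $x$ starting at position $j$, so $N_n^Q(B,x) = \sum_{j=1}^n \mathbf{1}_j^B(x)$. Since $\mathbf{1}_j^B$ depends only on $E_j, \ldots, E_{j+k-1}$, the indicators $\mathbf{1}_j^B$ and $\mathbf{1}_{j'}^B$ are independent whenever $|j-j'| \ge k$, i.e. the sequence is $k$-dependent. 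Because $q_n \to \infty$ and the $b_i$ are fixed, there is a threshold $J_0$ with $b_i < q_{j+i-1}$ for all $i$ and all $j \ge J_0$; hence $\mathbb{E}[\mathbf{1}_j^B] = (q_j \cdots q_{j+k-1})^{-1}$ for $j \ge J_0$ and $\mathbb{E}[N_n^Q(B,\cdot)] = \qnk + O(1)$. This asymptotic identification of the mean with $\qnk$ is one place where the hypothesis that $Q$ is infinite in limit enters.

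For the forward implication, assume $Q$ is $k$-divergent, so $\qnk \to \infty$. Writing $S_n = N_n^Q(B,\cdot)$, I would first bound $\mathrm{Var}(S_n)$: the $k$-dependence annihilates every covariance with $|j - j'| \ge k$, while for $0 < j' - j = d < k$ one has $\mathrm{Cov}(\mathbf{1}_j^B, \mathbf{1}_{j'}^B) \le \mathbb{E}[\mathbf{1}_j^B \mathbf{1}_{j'}^B]$, which equals $(q_j \cdots q_{j+k+d-1})^{-1} \le \mathbb{E}[\mathbf{1}_j^B]$ when the two overlapping copies of $B$ agree and is $0$ otherwise. Summing the $O(k)$ nonzero diagonals yields $\mathrm{Var}(S_n) = O(\qnk)$, so Chebyshev's inequality gives $\lambda(|S_n/\qnk - 1| > \e) = O(1/(\e^2 \qnk))$ for each $\e > 0$. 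To upgrade this convergence in measure to almost-everywhere convergence I would pass to the subsequence $n_m := \min\{n : \qnk \ge m^2\}$; since the increments $\qnk - Q_{n-1}^{(k)} = (q_n \cdots q_{n+k-1})^{-1}$ tend to $0$, one gets $Q_{n_m}^{(k)} \in [m^2, m^2+1)$ for large $m$, whence $\sum_m \lambda(|S_{n_m}/Q_{n_m}^{(k)} - 1| > \e) < \infty$ and Borel--Cantelli gives $S_{n_m}/Q_{n_m}^{(k)} \to 1$ a.e. Finally, as $S_n$ is non-decreasing in $n$ and $Q_{n_{m+1}}^{(k)}/Q_{n_m}^{(k)} \to 1$, the sandwich $S_{n_m} \le S_n \le S_{n_{m+1}}$ for $n_m \le n \le n_{m+1}$ fills the gaps and yields $S_n/\qnk \to 1$ a.e. Since there are only countably many length-$k$ blocks $B$, intersecting the associated full-measure sets shows $\lmeas{\Nk{Q}{k}} = 1$.

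For the converse, suppose $Q$ is not $k$-divergent. As $\qnk$ is non-decreasing it then converges to a finite limit $L \ge Q_1^{(k)} = (q_1 \cdots q_k)^{-1} > 0$, so $\sum_j (q_j \cdots q_{j+k-1})^{-1} < \infty$. Take $B_0$ to be the all-zeros block of length $k$, which is admissible at every position. By the first Borel--Cantelli lemma, $N_\infty := \lim_n N_n^Q(B_0, \cdot)$ is finite a.e., and thus $N_n^Q(B_0,x)/\qnk \to N_\infty(x)/L$ a.e. It therefore suffices to show $N_\infty$ is not a.e. equal to $L$, and in fact $N_\infty$ is non-constant: on one hand $\lambda(N_\infty \ge 1) \ge \mathbb{E}[\mathbf{1}_1^{B_0}] = (q_1 \cdots q_k)^{-1} > 0$, and on the other $\lambda(N_\infty = 0) > 0$, because if $M$ is chosen with $\sum_{j \ge M+k}(q_j \cdots q_{j+k-1})^{-1} < 1/2$ then the independent events $\{E_1 = \cdots = E_{M+k-1} = 1\}$ and $\{$no all-zeros block begins at a position $\ge M+k\}$ each have positive measure and together preclude any occurrence of $B_0$. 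Hence $N_n^Q(B_0,\cdot)/\qnk \to 0 \ne 1$ on a set of positive measure and $\lmeas{\Nk{Q}{k}} < 1$. The statement for $\NQ = \bigcap_{k} \Nk{Q}{k}$ follows at once: if $Q$ is fully divergent each factor is of full measure and hence so is the countable intersection, whereas if $Q$ fails to be $k_0$-divergent for some $k_0$ then $\NQ \subseteq \Nk{Q}{k_0}$ cannot be of full measure.

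The step I expect to be the main obstacle is the passage from convergence in measure to almost-everywhere convergence. Because the variance is only of order $\qnk$, which may increase arbitrarily slowly, Chebyshev's bound is not summable along the full sequence and a direct Borel--Cantelli argument fails. The subsequence-plus-monotonicity device circumvents this, and it succeeds precisely because $Q$ is infinite in limit: this forces the increments of $\qnk$ to $0$, guaranteeing that the chosen subsequence satisfies $Q_{n_{m+1}}^{(k)}/Q_{n_m}^{(k)} \to 1$ so that the monotone interpolation is lossless.
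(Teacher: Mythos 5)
You cannot be compared against the paper's own proof here, because the paper does not contain one: \reft{measure} is imported from \cite{Mance4} and stated without proof, so your argument must be judged as a standalone proof. Judged that way, it is correct and complete. The product-measure model of the digits (independent, $E_j$ uniform on $\{0,1,\ldots,q_j-1\}$) is valid up to a Lebesgue-null set of disallowed expansions, and you invoke the hypothesis that $Q$ is infinite in limit exactly where it is indispensable: to make every fixed block admissible at all sufficiently late starting positions, so that $\mathbb{E}\left[N_n^Q(B,\cdot)\right]=\qnk+O(1)$. The variance bound $O\pr{\qnk}$ coming from $k$-dependence is right (each of the $O(k)$ off-diagonal sums is dominated by $\sum_j (q_j\cdots q_{j+k-1})^{-1}$), Chebyshev then gives convergence in measure at rate $O\pr{1/(\e^2\qnk)}$, and since $\qnk$ may grow arbitrarily slowly this is not summable along the full sequence; your subsequence $n_m$ with $Q_{n_m}^{(k)}\in[m^2,m^2+1)$ plus monotone interpolation is the standard and correct repair (note the containment needs only that each increment $(q_nq_{n+1}\cdots q_{n+k-1})^{-1}\le 2^{-k}<1$, so even this step does not really require $q_n\to\infty$). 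This is the classical second-moment/Borel--Cantelli route, in the spirit of R\'enyi's treatment of the order-one case \cite{Renyi}, rather than a novel approach.

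Your converse also avoids its one genuine trap. Almost-sure finiteness of $N_\infty$ (first Borel--Cantelli, using $\sum_j (q_j\cdots q_{j+k-1})^{-1}=L<\infty$) does not by itself preclude the possibility that $N_\infty=L$ almost everywhere, so one must produce a positive-measure set on which the limiting ratio differs from $1$; your event $\{E_1=\cdots=E_{M+k-1}=1\}$, intersected with the independent event (measure $>1/2$ by the choice of $M$) that no zero block starts at any position $\ge M+k$, does exactly that, giving $N_n^Q(B_0,\cdot)/\qnk\to 0$ on a set of positive measure and hence $\lmeas{\Nk{Q}{k}}<1$. The passage from the order-$k$ statement to the statement for $\NQ$ is immediate in both directions, as you say.
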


Note that in base~$b$, where $q_n=b$ for all $n$,
 the corresponding notions of $Q$-normality and $Q$-distribution normality are equivalent. This equivalence
is fundamental in the study of normality in base $b$. 

Another definition of normality, $Q$-ratio normality, has also been studied.  We do not introduce this notion here as this set contains the set of $Q$-normal numbers and all results in this paper that hold for $Q$-normal numbers also hold for $Q$-ratio normal numbers.
The complete containment relation between the sets of these normal numbers and pair-wise intersections thereof is proven in \cite{ppq1}.  The Hausdorff dimensions of difference sets such as $\RDN$ are computed in \cite{AireyManceHDDifference}.

A surprising property of $Q$-normality of order $k$ is that we may not conclude that $\Nk{Q}{k} \subseteq \Nk{Q}{j}$ for all $j <k$ like we may for the $b$-ary expansions.  In fact, it was shown in \cite{ppq2} that for every $k$ there exists a basic sequence $Q$ and a real number $x$ such that $\Nk{Q}{k} \backslash \bigcup_{j=1}^{k-1} \Nk{Q}{j}$ is non-empty.  Thus, rather than showing that some functions do not preserve $Q$-normality of order $k$, we will show that they do not preserve $Q$-normality of any order.  We will always demonstrate numbers not $Q$-normal of any order that either have at most finitely many copies of the digit $0$ or the digit $1$ in their $Q$-Cantor series expansion.

\section{Results}

We note the following theorem which may be stated in terms of $Q$-normality preserving functions.  Instead we present it in its current form for simplicity.

\begin{thrm}\labt{densityzerochange}
Suppose that $Q$ is infinite in limit and that $x=E_0.E_1E_2\cdots\wrt{Q}$ is $Q$-normal of order $k$.  Then there exists a real number $y=F_0.F_1F_2\cdots\wrt{Q}$ where $E_n \neq F_n$ on a set of density zero and $y \notin \bigcup_{j=1}^\infty \Nk{Q}{j}$.
\end{thrm}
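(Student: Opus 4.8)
The plan is to obtain $y$ from $x$ by overwriting long runs of a single digit---say the digit $1$---on a sparse family of intervals of indices, chosen so that these runs flood the expansion with the blocks $(1,1,\dots,1)$ of every length while occupying a set of indices of density zero. The one structural fact I would exploit is that $Q$ being infinite in limit forces $Q_n^{(1)}=o(n)$: since $q_i\to\infty$ we have $1/q_i\to 0$, so the Ces\`aro averages $\frac1n\sum_{i=1}^n 1/q_i=Q_n^{(1)}/n$ tend to $0$. This is the gap that makes the argument work, for the normalising quantity $Q_n^{(k)}$ then grows strictly slower than $n$, so a set of indices of density zero can still carry far more than $Q_n^{(k)}$ occurrences of a fixed block.

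\emph{Construction.} I would build inductively a rapidly increasing sequence of right endpoints $b_1<b_2<\cdots$ together with lengths $L_t$ satisfying $Q_{b_t}^{(1)}\ll L_t\ll b_t$; a concrete choice is $L_t=\lceil (Q_{b_t}^{(1)}b_t)^{1/2}\rceil$, which gives $L_t/Q_{b_t}^{(1)}\ge (b_t/Q_{b_t}^{(1)})^{1/2}\to\infty$ and $L_t/b_t\le (Q_{b_t}^{(1)}/b_t)^{1/2}\to 0$. Put $a_t=b_t-L_t+1$ and $I_t=[a_t,b_t]\cap\mathbb{N}$. At each step I would take $b_t$ large enough (possible since $Q_n^{(1)}/n\to 0$) that $a_t>b_{t-1}$, that $q_n\ge 3$ for all $n\ge a_t$, and that $\sum_{s\le t}L_s\le b_t/t$; the last condition makes $\bigcup_t I_t$ a set of density zero. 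Now define $F_n=1$ for $n\in\bigcup_t I_t$ and $F_n=E_n$ otherwise, and $y=E_0+\sum_n F_n/(q_1\cdots q_n)$. Since $\{n:E_n\ne F_n\}\subseteq\bigcup_t I_t$, the digits differ on a set of density zero, and since each $I_t$ lies where $q_n\ge 3$ we have $F_n=1\ne q_n-1$ for infinitely many $n$, so $(F_n)$ really is the $Q$-Cantor series expansion of $y$.

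\emph{Verification.} Fix an order $j$. For every $t$ with $L_t\ge j$ the block consisting of $j$ ones occurs in $y$ at each starting index $a_t,a_t+1,\dots,b_t-j+1$ (the digit $1$ is admissible everywhere because $q_n\ge 2$), so $N_{b_t}^Q\big((1,\dots,1),y\big)\ge L_t-j+1$; this lower bound depends only on the run and not on the digits outside it, so no bookkeeping of deleted occurrences is needed. Because $q_n\ge 2$ gives $Q_n^{(j)}\le Q_n^{(1)}$ for every $n$, I obtain
\[
\frac{N_{b_t}^Q\big((1,\dots,1),y\big)}{Q_{b_t}^{(j)}}\ \ge\ \frac{L_t-j+1}{Q_{b_t}^{(1)}}\ \longrightarrow\ \infty
\]
as $t\to\infty$. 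Thus the defining limit for membership in $\Nk{Q}{j}$ is not $1$, so $y\notin\Nk{Q}{j}$; as $j$ was arbitrary, $y\notin\bigcup_{j=1}^\infty\Nk{Q}{j}$, as required.

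The work is almost entirely in the construction rather than the verification, and the one point needing care is that a single $y$ must simultaneously fail normality of every order. I resolve this by using runs of a constant digit, a single run of length $L$ breaking all orders $j\le L$ at once, together with the monotonicity $Q_n^{(j)}\le Q_n^{(1)}$, so that beating order $1$ beats every order. The only genuine input is the inequality $Q_n^{(1)}=o(n)$ coming from ``infinite in limit''; note that the hypothesis that $x$ is $Q$-normal of order $k$ is not actually used beyond guaranteeing that $x$ has a $Q$-expansion to perturb.
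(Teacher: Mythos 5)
Your proof is correct, and it takes a genuinely different route from the paper's. The paper obtains $y$ by changing every digit of $x$ that equals $0$ into a $1$: the resulting number contains no zero digits, so for each $j$ the block $(0,0,\dots,0)$ of length $j$ never occurs and the ratio $N_n^Q(B,y)/Q_n^{(j)}$ is identically $0\neq 1$; the changed set is $\{n: E_n=0\}$, and its density-zero property comes from the frequency of the digit $0$ in $x$, i.e.\ from $N_n^Q((0),x)\sim Q_n^{(1)}=o(n)$. Your construction instead leaves $x$ untouched outside a sparse family of intervals and plants long runs of the digit $1$ there, so non-normality is witnessed by counts that are too \emph{large} (a $\limsup$ equal to $\infty$ along the endpoints $b_t$) rather than too \emph{small} (zero). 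This buys two real advantages. First, the paper's density-zero step is an order-$1$ statement, while the hypothesis of the theorem is only $Q$-normality of order $k$; since the paper itself emphasizes that $\Nk{Q}{k}\not\subseteq \Nk{Q}{1}$ in general, the sketched argument is not justified as stated when $k\geq 2$, whereas your argument needs no control whatsoever on the digit frequencies of $x$ and so sidesteps this issue entirely. Second, as you observe, your proof never uses the normality of $x$ at all: it shows that for any $Q$ infinite in limit, \emph{every} real number can be altered on a density-zero set of indices to produce a number lying outside $\bigcup_{j=1}^\infty \Nk{Q}{j}$, a strictly stronger statement. What the paper's approach buys in exchange is brevity and a $y$ of the specific form it advertises (at most finitely many copies of the digit $0$), which matches the convention announced at the end of Section 2. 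Two small points in your write-up deserve the care you gave them: the condition $q_n\geq 3$ on the intervals $I_t$, which guarantees $F_n=1\neq q_n-1$ there and hence that $(F_n)$ is a legitimate $Q$-Cantor expansion, and the inequality $Q_n^{(j)}\leq Q_n^{(1)}$ (valid since all $q_i\geq 2$), which lets a single run defeat all orders at once; both are stated and used correctly.
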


\reft{densityzerochange}  may be proven by changing all the digits of a $Q$-normal number that are equal to $0$ to $1$.
This shows that C. Aistleitner's result does not generalize to $Q$-normality for all $Q$-Cantor series expansions by letting $q=1$ and by letting $z=0.G_1G_2\cdots\wrt{Q}$ where $G_n$ is equal to $1$ along each of these indices and $0$ otherwise and setting $y=x+z$.


\begin{thrm}\labt{changedistnormal}
Suppose that $Q$ is a basic sequence 
and that $x=E_0.E_1E_2\cdots\wrt{Q}$ is $Q$-distribution normal.  If $y=F_0.F_1F_2\cdots\wrt{Q}$ and
\begin{equation}\labeq{condition}
\lim_{N \to \infty} \frac {1} {N} \sum_{n=1}^N \frac {|E_n-F_n|+1} {q_n} =0,
\end{equation}
then $y \in \DNQ$.
\end{thrm}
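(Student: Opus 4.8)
The plan is to translate $Q$-distribution normality into Weyl exponential sums and then compare the sums for $x$ and $y$ term by term. Recall that $y \in \DNQ$ means precisely that $(T_{Q,n}(y))_{n\ge 0}$ is uniformly distributed mod $1$, which by Weyl's criterion is equivalent to $\frac1N\sum_{n=1}^N e^{2\pi i h T_{Q,n}(y)} \to 0$ for every nonzero integer $h$. The first step is to make $T_{Q,n}$ explicit: multiplying the expansion \refeq{cseries} by $q_1\cdots q_n$ turns $E_0$ and the first $n$ digits into integers, which vanish mod $1$, leaving
\[
T_{Q,n}(x) = \sum_{k=n+1}^{\infty} \frac{E_k}{q_{n+1}\cdots q_k} \in [0,1),
\]
and the analogous formula for $y$. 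Thus $T_{Q,n}(x)$ and $T_{Q,n}(y)$ are the exact (unreduced) Cantor tails, which is what makes a clean comparison possible.

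Set $d_n := T_{Q,n}(x) - T_{Q,n}(y)$. The heart of the argument is a short recursive estimate. Peeling off the leading digit of each tail gives
\[
T_{Q,n}(x) = \frac{E_{n+1}}{q_{n+1}} + \frac{1}{q_{n+1}}\,T_{Q,n+1}(x),
\]
and the analogous identity for $y$, so that
\[
d_n = \frac{E_{n+1}-F_{n+1}}{q_{n+1}} + \frac{1}{q_{n+1}}\bigl(T_{Q,n+1}(x)-T_{Q,n+1}(y)\bigr).
\]
Since both $T_{Q,n+1}(x)$ and $T_{Q,n+1}(y)$ lie in $[0,1)$, their difference has absolute value strictly less than $1$, whence
\[
|d_n| \le \frac{|E_{n+1}-F_{n+1}|+1}{q_{n+1}}.
\]
This is exactly the summand of \refeq{condition}, shifted by one index; since a one-term index shift does not change a Cesàro limit, the hypothesis \refeq{condition} yields $\frac1N\sum_{n=1}^N |d_n| \to 0$. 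This is the step that \emph{uses} the $+1$ in \refeq{condition}: it absorbs the crude bound on the whole tail beyond the leading digit.

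Finally I would transfer uniform distribution from $x$ to $y$. For every nonzero integer $h$, using $|e^{2\pi i h s}-e^{2\pi i h t}| \le 2\pi|h|\,|s-t|$ termwise,
\[
\left| \frac1N\sum_{n=1}^{N} e^{2\pi i h T_{Q,n}(x)} - \frac1N\sum_{n=1}^{N} e^{2\pi i h T_{Q,n}(y)} \right| \le \frac{2\pi|h|}{N}\sum_{n=1}^{N} |d_n| \longrightarrow 0.
\]
Because $x \in \DNQ$, the first average tends to $0$ by Weyl's criterion, so the second does as well for every $h\neq 0$; hence $(T_{Q,n}(y))$ is uniformly distributed mod $1$ and $y \in \DNQ$. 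The argument needs no divergence or infinite-in-limit hypothesis on $Q$, consistent with the bare assumption in the statement. The only point requiring genuine care is the recursive estimate for $d_n$: the main obstacle is packaging the tail bound so that \refeq{condition} applies verbatim, and in particular noting that one should compare the \emph{unreduced} tails rather than their reductions mod $1$, so that the elementary bound $|T_{Q,n+1}(x)-T_{Q,n+1}(y)|<1$ is available. Once $|d_n| \le (|E_{n+1}-F_{n+1}|+1)/q_{n+1}$ is established, the remainder is a routine application of Weyl's criterion.
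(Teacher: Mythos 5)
Your proof is correct, and it rests on the same substantive estimate as the paper's: comparing the unreduced Cantor tails digit by digit to obtain $|T_{Q,n}(x)-T_{Q,n}(y)| \le \frac{|E_{n+1}-F_{n+1}|+1}{q_{n+1}}$, which is exactly the summand of \refeq{condition} up to an index shift (this estimate is also where the paper implicitly uses the convention that digits differ from $q_n-1$ infinitely often, so that both tails lie in $[0,1)$). Where you genuinely diverge is in how this pointwise bound is converted into uniform distribution of $(T_{Q,n}(y))$. The paper feeds it into the discrepancy perturbation lemma of Kuipers and Niederreiter (\refl{DiscKuN}): it first translates the Ces\`aro hypothesis \refeq{condition} into a family of density statements (for each threshold $1/i$, the set of indices where the bound exceeds $1/i$ eventually has density less than $1/i$), and then concludes $|D_n(x_1,\dots,x_n)-D_n(y_1,\dots,y_n)| \to 0$, so that $y \in \DNQ$. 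You instead apply Weyl's criterion directly: the termwise Lipschitz bound $|e^{2\pi i h s}-e^{2\pi i h t}| \le 2\pi |h| \, |s-t|$ controls the difference of the two Weyl sums by $2\pi|h|$ times the Ces\`aro average of $|d_n|$, which tends to zero by \refeq{condition} verbatim, with no density conversion and no quoted lemma beyond Weyl's criterion. Your route is more self-contained and uses the hypothesis in its natural $L^1$ form; what the paper's route buys is a quantitative discrepancy comparison (giving rates, not just limits) and economy of tools, since \refl{DiscKuN} is invoked a second time in the proof of \reft{main}. Both arguments correctly require only that $Q$ be a basic sequence, with no divergence or infinite-in-limit assumption.
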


\begin{cor}
Suppose that $Q$ satisfies
$$
\lim_{N \to \infty} \frac {1} {N} \sum_{n=1}^N \frac {1} {q_n} =0
$$
 and that $x=E_0.E_1E_2\cdots\wrt{Q}$ is $Q$-distribution normal.  If $y=F_0.F_1F_2\cdots\wrt{Q}$ and
\begin{equation}\labeq{condition}
\lim_{n \to \infty} \frac {|E_n-F_n|} {q_n} =0,
\end{equation}
then $y \in \DNQ$.
\end{cor}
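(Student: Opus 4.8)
The plan is to deduce this directly from \reft{changedistnormal} by verifying its hypothesis from the two assumptions stated here; there is nothing to prove about distribution normality itself, since the theorem already packages that work. First I would split the Ces\`aro average that appears in the theorem's hypothesis additively:
$$\frac{1}{N}\sum_{n=1}^N \frac{|E_n-F_n|+1}{q_n} = \frac{1}{N}\sum_{n=1}^N \frac{|E_n-F_n|}{q_n} + \frac{1}{N}\sum_{n=1}^N \frac{1}{q_n}.$$
The second summand tends to $0$ by the standing hypothesis on $Q$, so it suffices to show that the first summand also tends to $0$.

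For the first summand, set $a_n := |E_n-F_n|/q_n$. The hypothesis of the corollary states exactly that $a_n \to 0$ as $n \to \infty$. I would then invoke the elementary fact that the Ces\`aro means of a null sequence are themselves null, i.e. $a_n \to 0$ implies
$$\frac{1}{N}\sum_{n=1}^N a_n \to 0.$$
Combining the two estimates shows that the full average tends to $0$, which is precisely the hypothesis of \reft{changedistnormal}. Applying that theorem then yields $y \in \DNQ$, as desired.

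There is essentially no obstacle here: the corollary is a routine strengthening-of-hypothesis/weakening-of-flexibility reformulation of the theorem. The only point requiring any care is the passage from the pointwise decay $a_n \to 0$ to the decay of its Ces\`aro averages, and even this is the standard one-line argument (given $\e>0$, choose $M$ with $a_n < \e$ for $n > M$, bound the finitely many terms $a_1,\ldots,a_M$ by a constant, and let $N \to \infty$). It is perhaps worth remarking in passing why the corollary's conditions are genuinely more restrictive than the theorem's single Ces\`aro condition: \reft{changedistnormal} permits $|E_n-F_n|/q_n$ to be large along a sparse set of indices $n$, whereas the corollary forbids this by imposing the pointwise limit, in exchange for the clean separation into the digit-difference term and the averaged smallness of $1/q_n$.
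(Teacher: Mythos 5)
Your proposal is correct and is exactly the intended derivation: the paper states this corollary without proof as an immediate consequence of \reft{changedistnormal}, and your splitting of the Ces\`aro average plus the standard fact that a null sequence has null Ces\`aro means is precisely the verification being left to the reader. No gaps; the remark on why the corollary's hypotheses are strictly stronger is a nice touch but not needed.
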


Together, \reft{densityzerochange} and \reft{changedistnormal} suggest that $Q$-distribution normality is a far more robust notion than $Q$-normality.  We wish to give the following example demonstrating that distribution normality is not preserved in all bases by rational multiplication.

\begin{example}
Define the sequences $E=(E_n)$ and $Q=(q_n)$ by
\begin{align*}
E=(0,0,2,0,2,4,0,2,4,6,\cdots);\\
Q=(2,4,4,6,6,6,8,8,8,8,\cdots).
\end{align*}
Set $x=\formalsum$.  Then $x \in \DNQ$, but $\pi_{1/2}(x) \notin \DNQ$.
\end{example}
However, a much stronger result holds:

\begin{thrm}\labt{distpres}
For all basic sequences $Q$, $Q$-distribution normality is preserved by non-zero integer multiplication.  If $Q$ is infinite in limit and $r \in \mathbb{Q} \backslash \mathbb{Z}$, then 
\begin{align}\labeq{measrDNQ}
\lambda\left(\br{x \in \DNQ : \pi_r(x) \notin \DNQ }\right)=0;\\
\labeq{HDDNQ}
\dimh{\br{x \in \DNQ : \pi_r(x) \notin \DNQ }}=1.
\end{align}
\end{thrm}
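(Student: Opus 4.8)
The plan is to handle the three assertions by entirely different means. For the first, I would use Weyl's criterion together with the identity $\tqn{\pi_r(x)}=\br{r\,\tqn{x}}$, valid for every integer $r$: since $\q{n}x=\floor{\q{n}x}+\tqn{x}$, multiplying by the integer $r$ changes the fractional part only by the integer $r\floor{\q{n}x}$. Hence for each nonzero integer $h$,
\[
\frac1N\sum_{n=0}^{N-1}e^{2\pi i h\,r\,\tqn{x}}=\frac1N\sum_{n=0}^{N-1}e^{2\pi i (hr)\tqn{x}}\to 0,
\]
because $hr$ is a nonzero integer and $(\tqn{x})$ is uniformly distributed; thus $(\tqn{\pi_r(x)})$ is uniformly distributed and $\pi_r$ preserves $\DNQ$ for every nonzero integer $r$.

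For \refeq{measrDNQ} no dynamics are needed. Because $\DNQ$ has full Lebesgue measure for every basic sequence $Q$, its complement $\mathbb{R}\setminus\DNQ$ is null, and $\br{x:\pi_r(x)\notin\DNQ}=\tfrac1r\pr{\mathbb{R}\setminus\DNQ}$ is a dilate of a null set, hence null; the set in \refeq{measrDNQ} is contained in it. (Only $r\neq 0$ is used here.)

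The content is \refeq{HDDNQ}. Write $r=p/q$ in lowest terms, $q\ge2$, and set $M_n:=\floor{\q{n}x}$, $u_n:=\tqn{x}$. The guiding identity is
\[
\tqn{\pi_r(x)}\equiv \tfrac{p}{q}\q{n}x\equiv \frac{B_n+p\,u_n}{q}\pmod 1,\qquad B_n:=pM_n\bmod q\in\br{0,\ldots,q-1},
\]
exhibiting $\tqn{\pi_r(x)}$ as a coarse part on $\tfrac1q\mathbb{Z}$, carried by $M_n\bmod q$, plus a fine part $\tfrac pq u_n$. From the recursion $M_{n+1}=q_{n+1}M_n+E_{n+1}$ it follows that if every Cantor digit of $x\in[0,1)$ satisfies $E_n\equiv0\pmod q$, then $M_n\equiv0\pmod q$ for all $n$, so $B_n=0$ and $\tqn{\pi_r(x)}=\br{\tfrac pq u_n}$. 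Therefore, whenever such an $x$ lies in $\DNQ$ the empirical distribution of $(\tqn{\pi_r(x)})$ converges to the push-forward of Lebesgue measure under $u\mapsto\br{\tfrac pq u}$, which is non-uniform exactly because $q\ge2$; hence $\pi_r(x)\notin\DNQ$. Consequently, with
\[
C:=\br{x\in[0,1):E_n(x)\in D_n\ \forall n},\qquad D_n:=\br{0,q,2q,\ldots}\cap[0,q_n),
\]
we have $C\cap\DNQ\subseteq\br{x\in\DNQ:\pi_r(x)\notin\DNQ}$, and it suffices to prove $\dimh{C\cap\DNQ}=1$.

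To this end I would place on $C$ the product measure $\mu$ under which the digits are independent and uniform on $D_n$, and establish two things. First, $\mu$ has local dimension $1$: the level-$n$ cylinder $I_n$ has $\mu(I_n)=\prod_{m=1}^n\abs{D_m}^{-1}$ and $\abs{I_n}=\pr{\q{n}}^{-1}$, so
\[
\frac{\log\mu(I_n)}{\log\abs{I_n}}=\frac{\sum_{m\le n}\log\abs{D_m}}{\sum_{m\le n}\log q_m}=1-\frac{\sum_{m\le n}\log\pr{q_m/\abs{D_m}}}{\sum_{m\le n}\log q_m}\longrightarrow1,
\]
because $\abs{D_m}\sim q_m/q$ while $q_m\to\infty$ — this is exactly where the hypothesis that $Q$ is infinite in limit enters. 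Second, $\mu(\DNQ)=1$: as $q_n\to\infty$ the admissible digits fill $[0,q_n)$ in the limit, so under $\mu$ the law of $u_n=\tqn{x}$ tends to Lebesgue, and a second-moment estimate $\int\abs{\frac1N\sum_{n<N}e^{2\pi i h u_n}}^2 d\mu\to0$ for each nonzero integer $h$, followed by a Davenport--Erd\H{o}s--LeVeque passage to a.e.\ convergence, gives uniform distribution of $(u_n)$ for $\mu$-a.e.\ $x$. Granting both, $\mu(C\cap\DNQ)=1$ and the mass distribution principle give $\dimh{C\cap\DNQ}\ge1$, with the reverse inequality trivial. I expect the real obstacle to be $\mu(\DNQ)=1$: the digits are confined to the sparse residue class $0\bmod q$, so one cannot invoke the usual full-measure theorem and must instead verify the decorrelation of $u_n$ and $u_{n'}$ directly from the independence of the digits, controlling the off-diagonal character sums $\int e^{2\pi i h(u_n-u_{n'})}d\mu$ and the $O(1/N)$ diagonal uniformly despite the restriction to $D_n$. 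The geometric dimension count is routine; these character-sum estimates are where the work lies.
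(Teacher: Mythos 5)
Your treatment of the first two claims matches the paper's: integer multiplication is handled through $\tqn{\pi_r(x)}=\br{r\,\tqn{x}}$ and Weyl's criterion, and \refeq{measrDNQ} follows because the set in question sits inside the image of the null set $\mathbb{R}\setminus\DNQ$ under the Lipschitz map $\pi_{1/r}$. For \refeq{HDDNQ} your core mechanism is also the paper's: with $r=p/q$ in lowest terms, digits divisible by $q$ force $\tqn{\pi_r(x)}=\br{\tfrac{p}{q}\tqn{x}}$, whose limit distribution along a distribution normal $x$ is the non-uniform push-forward of Lebesgue measure under $u\mapsto\br{pu/q}$ (the paper phrases this as: such an $x$ has $x/q$ again admissible w.r.t.\ $Q$, so $\tqn{x/q}$ is trapped near $[0,1/q)$). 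The structural difference is how full dimension is extracted. The paper does not take the whole divisible-digit Cantor set $C$ and intersect it with $\DNQ$; it shrinks $C$ further by forcing the digits to track a \emph{fixed} uniformly distributed sequence $(x_n)$ within windows of roughly $q_n^{1-f(n)}/q$ admissible digits, so that \emph{every} point of the resulting homogeneous Moran set $\Phi_{Q,b}$ is automatically $Q$-distribution normal; Theorem 2.1 of Feng--Wen--Wu then applies verbatim and no probabilistic statement is needed.

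Your route instead hinges on $\mu(\DNQ)=1$ for the natural measure $\mu$ on $C$, and there your proposed technique has a genuine gap --- precisely at the step you flagged as the crux. Under $\mu$ the decorrelation extractable from a single digit is the geometric sum $\abs{\int e^{2\pi i h E_{n+1}/q_{n+1}}\,d\mu}$, which can genuinely be of order $q/q_{n+1}$ (e.g.\ when $q_{n+1}\equiv 1 \bmod q$); since the exponents $u_n-u_{n'}$ are linear in the independent digits, the off-diagonal terms factor into products of such sums, and the best one gets is
\begin{equation*}
\int\abs{\frac1N\sum_{n\le N}e^{2\pi i h u_n}}^2 d\mu \;\le\; C\pr{\frac1N+\pr{\frac1N\sum_{n\le N}\frac{q}{q_n}}^2}.
\end{equation*}
This tends to $0$, but Davenport--Erd\H{o}s--LeVeque requires $\sum_N N^{-1}\int\abs{S_N}^2d\mu<\infty$, which fails when $q_n\to\infty$ slowly: for $q_n\asymp\log\log n$ the bound gives $\sum_N 1/(N(\log\log N)^2)=\infty$, and passing to a geometric subsequence with Chebyshev--Borel--Cantelli meets the same divergence. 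So as stated the argument does not cover all $Q$ infinite in limit. The claim $\mu(\DNQ)=1$ is nevertheless true, and the repair is to abandon Weyl sums in favor of the independence you already have: the variables $Y_n:=E_{n+1}/q_{n+1}$ are independent under $\mu$ with $\sup_\alpha\abs{\mu(Y_n\le\alpha)-\alpha}=O(q/q_{n+1})\to0$, so Hoeffding's inequality plus Borel--Cantelli over a countable dense set of intervals shows $(Y_n)$ is uniformly distributed $\mu$-a.s., and since $\abs{\tqn{x}-Y_n}<1/q_{n+1}\to0$, \refl{DiscKuN} transfers this to $(\tqn{x})$. With that substitution --- and with the ball-versus-cylinder estimate spelled out, since Feng--Wen--Wu's theorem applies to Moran sets and not to their positive-measure subsets, so you must run a Billingsley-type mass distribution argument for $C\cap\DNQ$ itself --- your proof closes and gives a correct, more measure-theoretic alternative to the paper's construction.
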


We wish to define an equivalence relation $\sim$ on the set of basic sequences as follows.
If $P=(p_n)$ and $Q=(q_n)$ are basic sequences then we write $P \sim Q$ if $p_n \neq q_n$ on a set of density zero.

Suppose that $Q$ that is infinite in limit and fully divergent.  A nonempty subset $S_Q \subseteq \NQ \backslash \DNQ$ was shown to exist in Theorem 3.12 in \cite{ppq1}.  The members of $S_Q$ have the following property.  If $x \in S_Q$, then for any integers $n \geq 2$, the real number $\pi_n(x)$ is not $Q$-distribution normal and not $Q$-normal of any order.  Since $Q$-distribution normality is preserved by integer multiplication it is natural to ask if there are any basic sequences $Q$ and real numbers $x$ that are $Q$ normal and $Q$-distribution normal and such that for any integers $n \geq 2$ the number $\pi_n(y)$ are not $Q$-normal of any order.  The following theorem answers this question.

\begin{thrm}\labt{main}
Let $k\in \mathbb{N}$.  If $P=(p_n)$ is eventually non-decreasing, infinite in limit, and $k$-divergent (resp. fully divergent), then there exists a basic sequence $Q=(q_n)$ and a real number $x$ where the following hold.
\begin{enumerate}
\item The basic sequence $Q$ is infinite in limit,  $k$-divergent (resp. fully divergent), and $P \sim Q$.
\item The real number $x$ is $Q$-normal of all orders $1$ through $k$ (resp. $Q$-normal) and $Q$-distribution normal.
\item For every integer $n \geq 2$, the real number $\pi_n(x)$ is not $Q$-normal of any order.
\end{enumerate}
\end{thrm}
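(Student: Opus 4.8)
The plan is to obtain $x$ from a single randomized construction whose sample space is rigged so that (3) holds deterministically for every point of the support, while (2) holds almost surely. We may assume $x\in[0,1)$ and work directly with the digits $E_j$. Take $Q=P$, so that $P\sim Q$ trivially; using that $P$ is eventually non-decreasing and infinite in limit, partition the indices into consecutive blocks $I_2,I_3,\dots$, with $I_m=[N_m,N_{m+1})$, where the $N_m$ are chosen increasing so rapidly that $q_j\ge m^{4}$ for every $j\in I_m$ and $\abs{I_m}\to\infty$. Since $Q=P$ is infinite in limit and $k$-divergent (resp.\ fully divergent), part (1) is immediate. The block $I_m$ will be responsible for controlling $\pi_n$ for all multipliers $2\le n\le m$.

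The heart is a carry lemma making (3) automatic. For $j\in I_m$ set
\[
B_j=\br{E\in\{0,\dots,q_j-1\}:\ nE\equiv -c \pmod{q_j}\ \text{for some }2\le n\le m,\ 0\le c\le n},
\]
and $A_j=\{0,\dots,q_j-1\}\setminus B_j$. Each congruence $nE\equiv -c$ has at most $\gcd(n,q_j)\le n$ solutions, so $\abs{B_j}=O(m^{3})$ and hence $\abs{A_j}/q_j\to1$. The design of $A_j$ gives a carry-uniform bound: if every digit of $x$ lies in $A_j$, then for each fixed $n\ge2$ the $Q$-Cantor digits $F_j$ of $\pi_n(x)=nx$ satisfy $F_j\ne0$ for all $j$ in blocks $I_m$ with $m\ge n$. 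Indeed, carrying from right to left in $nx=\sum_j nE_j/(q_1\cdots q_j)$ yields carries $c_{j+1}\in\{0,\dots,n\}$ (bounded by $n$ because $0\le E_j\le q_j-1$ forces $nE_j+c_{j+1}\le nq_j$), and $F_j=(nE_j+c_{j+1})\bmod q_j$; membership $E_j\in A_j$ excludes $nE_j\equiv-c$ for every admissible carry $0\le c\le n$, so $F_j\neq0$. Only the finitely many indices in blocks $I_m$ with $m<n$ can produce a digit $0$, so $\pi_n(x)$ has at most finitely many $0$'s. Since $Q$ is $j$-divergent for $j\le k$ (resp.\ for all $j$), the all-zero block then occurs only finitely often while $Q_n^{(j)}\to\infty$, whence $\pi_n(x)\notin\Nk{Q}{j}$ for every such order (and for the remaining orders in the $k$-divergent case the same missing-digit observation applies, the claim being immediate where $Q$ is not $j$-divergent); this is (3).

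To realize the digits, let $\mu=\prod_{j}\mathrm{Unif}(A_j)$ be the product measure under which the $E_j$ are independent and uniform on $A_j$. Every $x$ in $\mathrm{supp}(\mu)$ has all digits in $A_j$, so the carry lemma applies to it and (3) is automatic. The remaining task is to show (2) holds $\mu$-almost surely. For $Q$-normality this is a perturbation of \reft{measure}: because $\abs{A_j}/q_j\to1$, independent near-uniform digits make every block of length $j\le k$ (resp.\ every length) occur with the correct frequency relative to $Q_n^{(j)}$, by the same variance estimate and Borel--Cantelli argument as in \reft{measure}. For $Q$-distribution normality, one uses $\tqn{x}=E_{n+1}/q_{n+1}+O(1/q_{n+1})$ with $q_{n+1}\to\infty$: the laws of $E_{n+1}/q_{n+1}$ converge weakly to Lebesgue measure on $[0,1)$, and a Weyl-sum second-moment computation shows $(\tqn{x})$ is $\mu$-a.e.\ uniformly distributed mod $1$. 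Any $x$ in the $\mu$-full intersection of these two events satisfies (1)--(3).

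The main obstacle is precisely this almost-sure verification of (2) for the restricted measure $\mu$: one cannot simply quote \reft{measure} and Weyl's theorem for the unrestricted uniform product, since forcing $E_j\in A_j$ at every index is a null event there (by Borel--Cantelli, $E_j\notin A_j$ infinitely often a.s.). Thus the block-frequency law and the equidistribution of $(\tqn{x})$ must be re-derived for the perturbed product $\mu$; the saving feature is that the perturbation is asymptotically negligible ($\abs{A_j}/q_j\to1$), so the first and second moments controlling both statements agree with the uniform case up to $o(1)$. A secondary point to handle with care is the carry bookkeeping across block boundaries together with properness of the expansion of $\pi_n(x)$ (that $F_j\ne q_j-1$ infinitely often), both of which hold $\mu$-almost surely.
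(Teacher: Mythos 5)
There is a fatal, structural gap in your construction: the digit $0$ lies in every one of your excluded sets $B_j$. Indeed, taking $c=0$ in your defining congruence gives $n\cdot 0\equiv 0\pmod{q_j}$ for every $2\le n\le m$, so $0\in B_j$, hence $0\notin A_j$, for \emph{all} $j$. Consequently every $x$ in $\supp{\mu}$ omits the digit $0$ from its entire $Q$-Cantor series expansion. But then for any block $B$ containing the digit $0$ we have $N_n^Q(B,x)=0$ for all $n$, so $N_n^Q(B,x)/Q_n^{(j)}=0\not\to 1$, and $x\notin\Nk{Q}{j}$ for every $j$; property (2) fails \emph{deterministically} on all of $\supp{\mu}$. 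No almost-sure argument can repair this, so your appeal to ``the same variance estimate and Borel--Cantelli argument as in \reft{measure}'' cannot go through: under $\mu$ the expected number of occurrences of any block containing a $0$ is exactly $0$ rather than $\sim Q_n^{(j)}$, i.e.\ the ``perturbation'' is not $o(1)$ in the only sense that matters. Moreover the flaw cannot be patched inside your framework. If the digits are independent and you want the digit $F_j$ of $nx$ to avoid $0$, you must exclude the configuration ($E_j=0$, carry $c_{j+1}=0$), and position-by-position exclusion forces $E_j\neq 0$ outright; if you instead target the digit $1$ in $nx$ (congruences $nE\equiv 1-c$), then $E_j=0$ is again excluded via the admissible carry $c=1$. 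Any scheme that keeps $Q=P$ and forbids fixed residues of the digits of $x$ in order to control the digits of $nx$ necessarily deletes the digit $0$ from $x$ and thereby destroys (2).

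This obstruction is precisely what the paper's proof is designed to evade, and it explains why the theorem is stated with $P\sim Q$ rather than $Q=P$: the freedom to alter the base sequence on a density-zero set is essential, not cosmetic. The paper never restricts the digits of $x$. It starts from a single $y$ that is $P$-distribution normal and whose multiples $ny$ are all $P$-normal (such $y$ exists by \reft{measure}), and then inflates the bases, setting $q_n=M(n)p_n$ on the sparse set of positions where some $\pi_i(y)$ with $i\le M(n)$ has digit $1$ w.r.t.\ $P$, where $M(n)\to\infty$ slowly. Setting $x=\ppq(y)$, the digit string of $x$ w.r.t.\ $Q$ is literally that of $y$ w.r.t.\ $P$, the inflated bases force $E_{Q,n}(\pi_m(x))\neq 1$ for $2 \le m \le M(n)$ --- note the target is the digit $1$, not $0$, exactly because a missing $0$ in $nx$ would propagate back to a missing $0$ in $x$ --- and then (1) and (2) are transferred from $P$ to $Q$: block counts are unchanged, $Q_n^{(j)}/P_n^{(j)}\to 1$ by \reft{RajWeight}, and distribution normality survives the density-zero change of bases by \refl{DiscKuN}. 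To salvage a randomized version of your argument you would have to randomize the base sequence itself, or correlate each digit $E_j$ with the ensuing carry; either way the measure is no longer a product of per-digit uniforms, which is the feature your proof leans on.
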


For a sequence of real numbers $X = (x_n)$ with $x_n \in [0,1)$ and an interval $I \subseteq [0,1]$,  define $A_n(I,X) = \# \{i\leq n: x_i \in I \}$.
We will first prove \reft{changedistnormal}.  To do this we will need the following standard definition and lemma that we quote from \cite{KuN}.

\begin{definition}
Let $X = \pr{x_1, \cdots , x_N}$ be a finite sequence of real numbers. The number $$D_N = D_N(X) = \sup_{0 \leq \alpha \leq \beta \leq 1} \left | \frac{A_N([\alpha, \beta), X)}{N} - (\beta - \alpha) \right |$$ is called the {\it discrepancy} of the sequence $X$.
\end{definition}

It is well known that a sequence $X$ is uniformly distributed mod $1$ if and only if $D_N(X) \to 0$.

\begin{lem}\labl{DiscKuN}
Let $x_1, x_2, \cdots, x_N$ and $y_1, y_2, \cdots, y_N$ be two finite sequences in $[0,1)$.  Suppose $\epsilon_1, \epsilon_2, \cdots, \epsilon_N$ are non-negative numbers such that $|x_n-y_n| \leq \epsilon_n$ for $1 \leq n \leq N$.  Then, for any $\epsilon \geq 0$, we have
$$
|D_N(x_1,\cdots,x_N)-D_N(y_1,\cdots,y_N)| \leq 2\epsilon+\frac {\overline{N}(\epsilon)}{N},
$$
where $\overline{N}(\epsilon)$ denotes the number of $n$, $1 \leq n \leq N$, such that $\epsilon_n>\epsilon$.
\end{lem}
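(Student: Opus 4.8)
The plan is to prove the one-sided bound $D_N(Y) \le D_N(X) + 2\epsilon + \overline{N}(\epsilon)/N$ and then obtain the lemma by symmetry, since the hypothesis $|x_n - y_n| \le \epsilon_n$ is symmetric in $X$ and $Y$. Fix an interval $[\alpha,\beta) \subseteq [0,1)$. The idea is to compare $A_N([\alpha,\beta),Y)$ with the count of $X$ against a slightly enlarged and a slightly shrunken interval, paying a controlled price for the indices where $\epsilon_n$ is large.

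First I would separate the indices $1 \le n \le N$ into the \emph{good} ones, where $\epsilon_n \le \epsilon$, and the \emph{bad} ones, where $\epsilon_n > \epsilon$; by definition there are at most $\overline{N}(\epsilon)$ bad indices. For a good index $|x_n - y_n| \le \epsilon$, so $y_n \in [\alpha,\beta)$ forces $x_n$ into the enlarged interval $[\alpha',\beta')$, where $\alpha' = \max(0, \alpha - \epsilon)$ and $\beta' = \min(1, \beta + \epsilon)$; the clipping at $0$ and $1$ keeps this a legitimate subinterval of $[0,1]$. Counting good and bad indices separately gives $A_N([\alpha,\beta),Y) \le A_N([\alpha',\beta'),X) + \overline{N}(\epsilon)$. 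Applying the definition of discrepancy to the interval $[\alpha',\beta')$ and using $\beta' - \alpha' \le (\beta - \alpha) + 2\epsilon$ then yields the upper estimate $A_N([\alpha,\beta),Y)/N - (\beta - \alpha) \le D_N(X) + 2\epsilon + \overline{N}(\epsilon)/N$.

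For the matching lower estimate I would run the same argument with the shrunken interval $[\alpha'',\beta'') = [\alpha + \epsilon, \beta - \epsilon)$: for a good index, $x_n \in [\alpha'',\beta'')$ forces $y_n \in [\alpha,\beta)$, so $A_N([\alpha'',\beta''),X) - \overline{N}(\epsilon) \le A_N([\alpha,\beta),Y)$, and the discrepancy bound together with $\beta'' - \alpha'' = (\beta-\alpha) - 2\epsilon$ gives $A_N([\alpha,\beta),Y)/N - (\beta-\alpha) \ge -D_N(X) - 2\epsilon - \overline{N}(\epsilon)/N$. Combining the two estimates, taking absolute values and then the supremum over all $[\alpha,\beta)$ produces $D_N(Y) \le D_N(X) + 2\epsilon + \overline{N}(\epsilon)/N$, and symmetry finishes the proof.

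The only genuinely delicate point is the boundary bookkeeping. I expect the main obstacle to be handling the degenerate case of the lower estimate when $\beta - \alpha \le 2\epsilon$, so that $[\alpha'',\beta'')$ is empty: there one cannot invoke the discrepancy bound for $X$, but the desired inequality holds trivially because $A_N([\alpha,\beta),Y)/N \ge 0$ while $\beta - \alpha \le 2\epsilon$. Similarly one must check that the clipping defining $\alpha',\beta'$ (and the analogous constraints coming from $x_n, y_n \in [0,1)$) never loses the containments claimed above; these are routine once the cases are set out carefully.
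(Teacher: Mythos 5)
Your proof is correct. Note that the paper does not actually prove this lemma: it is quoted verbatim from Kuipers and Niederreiter \cite{KuN}, and the argument you give is essentially the standard one from that source --- compare each interval $[\alpha,\beta)$ with its $\epsilon$-enlargement (for the upper bound on $A_N([\alpha,\beta),Y)$) and its $\epsilon$-shrinking (for the lower bound), count the at most $\overline{N}(\epsilon)$ indices with $\epsilon_n>\epsilon$ separately, take the supremum, and finish by symmetry of the hypothesis in $X$ and $Y$. Your attention to the two boundary issues (clipping $\alpha-\epsilon$ and $\beta+\epsilon$ to $[0,1]$, and the degenerate case $\beta-\alpha\le 2\epsilon$ where the shrunken interval is empty but the desired inequality is trivial) is precisely the bookkeeping needed to make the argument rigorous, so there is no gap.
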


\begin{proof}[Proof of \reft{changedistnormal}]
Set $\epsilon_i = \frac{1}{i}$. Define the sets 
$$
S(\epsilon_i) = \br{ i : \frac{|E_i-F_i|+1}{q_i} \geq \epsilon_i}
$$ 
and the sequence 
$$
N_i = \min \br{ n : \frac{\# S(\epsilon_i) \cap \{ 1, \cdots, j \}}{j} < \frac{1}{i}, \forall j\geq n}.
$$
 Note that $N_i$ is defined since the density of the sets $S(\epsilon_i)$ must be $0$ by \refeq{condition}. Set $x_i = T_{Q,i}(x)$ and $y_i = T_{Q,i}(y)$. Note that $|x_i-y_i| \leq \frac{|E_i-F_i|+1}{q_i}$.
Then for any $n > N_i$ we have by \refl{DiscKuN} that 
$$
| D_n(x_1, \cdots, x_n) - D_n(y_1, \cdots, y_n) | \leq 2 \epsilon_i + \frac{\#S(\epsilon_i) \cap \br{1, \cdots , N}}{N} < \frac{3}{i}.
$$ 
Thus, $\lim_{n \to \infty} | D_n(x_1, \cdots, x_n) - D_n(y_1, \cdots, y_n)| = 0$. This implies that $y$ is $Q$-distribution normal if and only if $x$ is.
\end{proof}

For the remainder of this paper let $E_{R,i}(\xi)$ be the $i$th digit of the $R$-Cantor series expansion of $\xi$.

\begin{proof}[Proof of \reft{distpres}]
The first part is trivial as any integer multiple of a uniformly distributed sequence is uniformly distributed. It is 
well known that locally Lipschitz functions preserve null sets. Clearly $\pi_r$ is locally Lipschitz,  so \refeq{measrDNQ} holds.

Let $r=a/b \in \mathbb{Q} \backslash \mathbb{Z}$ for relatively prime integers $a$ and $b$.  
We now wish to show \refeq{HDDNQ}.  Let $(x_n)$ be a sequence of real numbers that is uniformly distributed mod 1. Define 
$$
C(n) = \max \br{m \in b\mathbb{Z} : \frac{m}{q_n} \leq x_n}.
$$ 
Set 
$$
f(n) = \frac{\min \br{\log q_n, \log q_1 q_2 \cdots q_{n-1}}}{\log q_n} \hbox{ and }\omega(n) = \floor{\frac{q_n^{1-f(n)}}{b}}.
$$
 Note that 
$
\lim_{n \to \infty} \frac{\omega(n)}{q_n} = 0
$ 
since 
$$
\lim_{n \to \infty} f(n) \log(q_n) \to \infty.
$$ 
 Define the intervals 
$$
V_n = \left[C(n) - \frac{b}{2} \omega(n), C(n)+\frac{b}{2} \omega(n)\right] \cap b\mathbb{Z}.
$$
Consider the set 
$$
\Phi_{Q,b} = \{ x=0.E_1E_2\cdots\wrt{Q} : E_n \in V(n) \}.
$$ 
Note that for any $x \in \Phi_{Q,b}$, we have 
that 
$$
\lim_{n \to \infty} \pr{\frac{E_n}{q_n} - x_n} = 0
$$ 
since $\frac{\omega(n)}{q_n} \to 0$. This implies that $\pr{\frac{E_n}
{q_n}}$ is uniformly distributed mod 1, so the seqeunce $\pr{T_{Q,n}(x)}$ is as well. Therefore $\Phi_{Q,b} \subseteq \DN{Q}$. Furthermore,  
every digit of $x \in \Phi_{Q,b}$ is divisible by $b$. Thus 
$$
\frac{E_{Q,n}\pr{\frac{1}{b} x}}{q_n} \in \left[0,\frac{1}{b}\right)
$$ 
which implies that 
$$
T_{Q,n}\pr{\frac{1}{b} x} \in \left[0, \frac{1}{b} + \frac{1}{q_n} \right).
$$ 
We have that if 
$$
T_{Q,n}\pr{\frac{a}{b}x} \in \left[\frac{\floor{a/b}}{a}, \frac{1}
{b}+\frac{1}{q_n}\right),$$
 then 
$$
T_{Q,n}\pr{\frac{a}{b} x} \in \left[0, \frac{a}{b}+\frac{a}{q_n}\right).
$$ 
Let $c \equiv a \mod b$. Thus
$$
\lim_{n \to \infty} \frac{A_n\pr{[0,\frac{c}{b}), \pr{T_{Q,n}(\pi_r(x))}}}{n} = \frac{\pr{\frac{1}{b} - \frac{\floor{a/b}}{a}}\floor{a/b}}{\frac{1}{b}} = \floor{\frac{a}{b}} \frac{c}{a} \neq \frac{c}{b},
$$
so $\pi_r(x) \notin \DNQ$.

Following the notation of \cite{FengWenWu}, $\Phi_{Q,b}$ is a homogeneous Moran set with $c_k = \frac{1}{q_k}$ and $n_k = \omega(k)$. By Theorem 2.1 in \cite{FengWenWu}, we have that 
\begin{align*}
\dimh{\Phi_{Q,b}} &
\geq \liminf_{k \to \infty} \frac{\log n_1 n_2 \cdots n_k}{-\log c_1 c_2 \cdots c_{k+1} n_{k+1}} \\
&= \liminf_{k \to \infty} \frac{\log \omega(1) \omega(2) \cdots \omega(k)}{- \log\pr{\frac{1}{q_1} \frac{1}{q_2} \cdots \frac{1}{q_{k+1}} \omega(k+1)}}\\
&= \liminf_{k \to \infty} \frac{\log \pr{q_1^{1-f(i)} q_2^{1-f(2)} \cdots q_k^{1-f(k)}} - k \log b}{\log q_1 q_2 \cdots q_k - f(k+1)\log q_{k+1}} \\
&= \liminf_{k \to \infty} \frac{\log q_1 \cdots q_k}{\log q_1 \cdots q_k - f(k+1) \log q_{k+1}} \\
& = \liminf_{k \to \infty} \frac{1}{1 - \frac{f(k+1) \log q_{k+1}}{\log q_1 \cdots q_{k}}} = 1.
\end{align*}
\end{proof}

We may now turn our attention to \reft{main}.  
Let $(P,Q) \in \NNC$ and suppose that $x=E_0.E_1E_2 \cdots$ w.r.t. $P$.  We define $\ppq:\mathbb{R} \to [0,1]$ by
$$
\ppq(x):=\sum_{n=1}^\infty \frac {\min(E_n,q_n-1)} {\q{n}}.
$$
The following theorem of \cite{ppq1} will be critical in proving \reft{main}.
\begin{thrm}\labt{mainpsi}
Suppose  that $Q_1=(q_{1,n}), Q_2=(q_{2,n}),\cdots, Q_j=(q_{j,n})$ are basic sequences and infinite in limit.
  If $x=E_0.E_1E_2\cdots$ w.r.t $Q_1$ satisfies
$E_n<\min_{2 \leq r \leq j} (q_{r,n}-1)$ for infinitely many $n$, then for every block~$B$
$$
N_{n}^{Q_j}\left(B,\left(\psi_{Q_{j-1},Q_j} \circ \psi_{Q_{j-2},Q_{j-1}} \circ \cdots \circ \psi_{Q_1,Q_2}\right)(x)\right)
=N_{n}^{Q_1}(B,x)+O(1).
$$
\end{thrm}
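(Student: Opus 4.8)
The plan is to compute the $Q_j$-Cantor series digits of the composition explicitly and then to exploit that every $Q_r$ is infinite in limit in order to control how the successive truncations affect block counts. Write $z=\left(\psi_{Q_{j-1},Q_j}\circ\cdots\circ\psi_{Q_1,Q_2}\right)(x)$ and set $M_n=\min_{2\le r\le j}(q_{r,n}-1)$. The whole argument rests on two facts: that $z$ has $n$th $Q_j$-digit equal to $\min(E_n,M_n)$, and that $M_n\to\infty$.

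First I would show, by induction on $r$, that $\left(\psi_{Q_{r-1},Q_r}\circ\cdots\circ\psi_{Q_1,Q_2}\right)(x)$ has canonical $Q_r$-Cantor series expansion whose $n$th digit is $\min(E_n,q_{2,n}-1,\ldots,q_{r,n}-1)$. The inductive step is straightforward at the level of the formula: $\psi_{Q_{r-1},Q_r}$ replaces each input digit $d_m$ by $\min(d_m,q_{r,m}-1)$ and reads the result off with the weights of $Q_r$, and $\min(\cdot,q_{r,m}-1)$ lands in $\{0,\ldots,q_{r,m}-1\}$, so that $\min(\min(E_m,\ldots,q_{r-1,m}-1),q_{r,m}-1)=\min(E_m,q_{2,m}-1,\ldots,q_{r,m}-1)$. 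The delicate point, and where I expect the main bookkeeping obstacle, is verifying at each stage that this is genuinely the canonical expansion, i.e.\ that the $n$th digit differs from $q_{r,n}-1$ for infinitely many $n$, so that the next $\psi$ in the composition acts on the correct digit string. This is exactly where the hypothesis that $E_n<\min_{2\le r\le j}(q_{r,n}-1)$ for infinitely many $n$ enters: for each such $n$ one has $E_n<q_{s,n}-1$ for all $2\le s\le r$, so the $n$th digit equals $E_n<q_{r,n}-1$. Taking $r=j$ yields that the $n$th $Q_j$-digit of $z$ is $E_n^{(j)}=\min(E_n,M_n)$.

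Next I would compare block counts through a finiteness lemma. Fix a digit value $v$ and consider the set of $n$ with $\mathbf 1[E_n=v]\ne\mathbf 1[\min(E_n,M_n)=v]$. If $\min(E_n,M_n)=v$ but $E_n\ne v$ then necessarily $M_n=v$, and if $E_n=v$ but $\min(E_n,M_n)\ne v$ then $M_n<v$; in either case $M_n\le v$. Since each of the finitely many basic sequences $Q_2,\ldots,Q_j$ is infinite in limit, $q_{r,n}\to\infty$ for each such $r$, hence $M_n\to\infty$ and $M_n\le v$ holds for only finitely many $n$. Thus, for every fixed $v$, the indicators $\mathbf 1[E_n=v]$ and $\mathbf 1[E_n^{(j)}=v]$ agree for all but finitely many $n$. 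Here the crucial hypothesis is precisely ``infinite in limit'', which is what forces the exceptional set to be finite rather than merely of density zero.

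Finally, for a block $B=(b_1,\ldots,b_k)$, an occurrence of $B$ starting at position $n$ in the $Q_1$-expansion of $x$ and in the $Q_j$-expansion of $z$ can disagree only if $\mathbf 1[E_{n+i-1}=b_i]\ne\mathbf 1[E_{n+i-1}^{(j)}=b_i]$ for some $1\le i\le k$. By the previous paragraph, for each $i$ the set of positions $m$ with $\mathbf 1[E_m=b_i]\ne\mathbf 1[E_m^{(j)}=b_i]$ is finite, so the set of starting positions $n$ at which the occurrence indicators of $B$ can differ is contained in a finite union of shifts of these finite sets, and is therefore finite with cardinality bounded independently of $n$. Consequently $N_n^{Q_j}(B,z)$ and $N_n^{Q_1}(B,x)$ differ by at most a constant depending only on $B$ and the sequences $Q_r$, which is the asserted $O(1)$.
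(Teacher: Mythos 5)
Your proof is correct. Note, however, that this paper does not itself prove \reft{mainpsi}: the result is quoted from \cite{ppq1} and used as a black box in the proof of \reft{main}, so there is no in-paper argument to compare against. Your route --- computing the $n$th $Q_j$-digit of the composition as $\min(E_n,q_{2,n}-1,\ldots,q_{j,n}-1)$ by induction, invoking the hypothesis that $E_n<\min_{2\le r\le j}(q_{r,n}-1)$ for infinitely many $n$ exactly where it is needed (to certify that each intermediate digit string is a canonical Cantor series expansion, i.e.\ has digits $\neq q_{r,n}-1$ infinitely often), and then using the infinite-in-limit hypothesis to show that for each fixed digit value the two indicator sequences disagree at only finitely many positions, so that block counts differ by a constant depending only on $B$ --- is the natural proof of this statement and is essentially the argument one finds in the cited source.
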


Here we state a theorem of C. T. Rajagopal \cite{RajagopalWeightedDensity}.

\begin{thrm}\labt{RajWeight}
Let $(w_n)$ and $(s_n)$ be sequences of positive real numbers such that $\sum_{n=1}^\infty w_n = \sum_{n=1}^\infty s_n = \infty$. If $\frac{w_n}{s_n}$ is non-increasing, then for every sequence $(a_n)$ of real numbers, we have that
\begin{align*}
&\liminf_{n \to \infty} \frac{\sum_{k=1}^n s_n a_n}{\sum_{k=1}^n s_n} \leq \liminf_{n \to \infty} \frac{\sum_{k=1}^n w_n a_n}{\sum_{k=1}^n w_n} \\
&\leq \limsup_{n \to \infty}  \frac{\sum_{k=1}^n w_n a_n}{\sum_{k=1}^n w_n} \leq \limsup_{n \to \infty} \frac{\sum_{k=1}^n s_n a_n}{\sum_{k=1}^n s_n}.
\end{align*}
\end{thrm}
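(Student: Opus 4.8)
The plan is to reduce the four-term chain to a single comparison and then prove that comparison by summation by parts. The middle inequality is trivial, since it is just $\liminf \le \limsup$ applied to the one sequence $\left(\sum_{k=1}^n w_k a_k \big/ \sum_{k=1}^n w_k\right)$. Moreover the left-hand (liminf) inequality follows from the right-hand (limsup) inequality by replacing $(a_n)$ with $(-a_n)$: applying the limsup comparison to $(-a_n)$ and negating turns it into exactly the liminf comparison. Hence it suffices to prove
$$
\limsup_{n\to\infty}\frac{\sum_{k=1}^n w_k a_k}{\sum_{k=1}^n w_k}\le \limsup_{n\to\infty}\frac{\sum_{k=1}^n s_k a_k}{\sum_{k=1}^n s_k}.
$$
(Here I read the sums in the statement as $\sum_{k=1}^n s_k a_k$, etc., the subscripts being a typo.) I would write $r_k=w_k/s_k$, which is positive and non-increasing by hypothesis, and set $S_n=\sum_{k=1}^n s_k$, $W_n=\sum_{k=1}^n w_k$, $A_n=\sum_{k=1}^n s_k a_k$, and $L=\limsup_n A_n/S_n$. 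If $L=+\infty$ there is nothing to prove, so assume $L<\infty$; note $W_n\to\infty$ by hypothesis.

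The key step is to express both the numerator $\sum_{k=1}^n w_k a_k=\sum_{k=1}^n r_k(s_k a_k)$ and the denominator $W_n=\sum_{k=1}^n r_k s_k$ by Abel summation. With $A_0=S_0=0$ these read
$$
\sum_{k=1}^n w_k a_k=r_n A_n+\sum_{k=1}^{n-1}(r_k-r_{k+1})A_k,\qquad W_n=r_n S_n+\sum_{k=1}^{n-1}(r_k-r_{k+1})S_k,
$$
and the monotonicity of $(r_k)$ makes every coefficient $r_n$ and $r_k-r_{k+1}$ nonnegative. This nonnegativity is precisely what lets me transport the pointwise bound on $A_k/S_k$ into a bound on the weighted average; it is where the hypothesis ``$w_n/s_n$ non-increasing'' is used, and reversing the monotonicity would reverse the comparison.

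To finish, fix $\epsilon>0$ and choose $K$ with $A_k\le(L+\epsilon)S_k$ for all $k\ge K$. Splitting off the finitely many terms of index below $K$ as a fixed quantity independent of $n$, and applying $A_k\le(L+\epsilon)S_k$ to the remaining terms (legitimate because their coefficients are nonnegative), together with the parts formula for $W_n$, I would obtain $\sum_{k=1}^n w_k a_k\le C'+(L+\epsilon)W_n$ for some constant $C'$ independent of $n$. Dividing by $W_n$ and using $W_n\to\infty$ yields $\limsup_n \sum_{k=1}^n w_k a_k/W_n\le L+\epsilon$, and letting $\epsilon\to0$ gives the desired inequality.

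The main obstacle I anticipate is bookkeeping rather than anything conceptual: one must organize the summation by parts so that the low-index remainder is genuinely a bounded constant, and one must verify that the estimate $A_k\le(L+\epsilon)S_k$ survives multiplication by the nonnegative weights regardless of the sign of $L+\epsilon$. The clean way to avoid a case split on the sign of $L$ is to keep the low-index contribution as an exact additive constant (rather than discarding it), so that after dividing by $W_n\to\infty$ it vanishes in the limit in all cases. Everything else is the routine regularity theory of weighted (Riesz/N\"orlund) means.
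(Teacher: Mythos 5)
Your proposal is correct, but there is nothing in the paper to compare it against: the paper does not prove this statement, it simply quotes it as a known theorem of C.~T.~Rajagopal \cite{RajagopalWeightedDensity} (your reading of the summation indices as $\sum_{k=1}^n s_k a_k$ etc.\ is indeed the intended one). What you give is the classical self-contained argument for comparison of Riesz-type weighted means, and it holds up: the reduction to the single limsup inequality is valid (the middle inequality is trivial, and negating $(a_n)$ converts the limsup comparison into the liminf one); the Abel-summation identities $\sum_{k=1}^n w_k a_k = r_n A_n + \sum_{k=1}^{n-1}(r_k-r_{k+1})A_k$ and $W_n = r_n S_n + \sum_{k=1}^{n-1}(r_k-r_{k+1})S_k$ are correct with $A_0=S_0=0$; monotonicity of $r_k=w_k/s_k$ makes all coefficients nonnegative, so the bound $A_k\le(L+\epsilon)S_k$ for $k\ge K$ transports to the weighted sums; and keeping the low-index contribution as an exact additive constant (rather than a discarded error term) does, as you say, avoid any case analysis on the sign of $L$, since dividing by $W_n\to\infty$ kills it. Two small points: your dichotomy ``$L=+\infty$ or $L<\infty$'' leaves the case $L=-\infty$ formally uncovered, since $L+\epsilon$ is then meaningless; the fix is one line (for arbitrary $M\in\mathbb{R}$ choose $K$ with $A_k\le MS_k$ for $k\ge K$ and run the identical estimate to get $\limsup\le M$ for every $M$). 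Also, your proof never uses $\sum s_n=\infty$ directly, only $\sum w_n=\infty$; this is consistent, since $w_n\le r_1 s_n$ shows the former follows from the latter anyway. With that one-line patch, your argument is a complete and correct proof of the quoted theorem.
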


We also note the following basic lemma.

\begin{lem}\labl{tcorr}
Let $L$ be a real number and $(a_n)_{n=1}^\infty$ and $(b_n)_{n=1}^\infty$ be two sequences of positive real numbers such that
$$
\sum_{n=1}^{\infty} b_n=\infty \hbox{ and } \lim_{n \to \infty} \frac {a_n} {b_n}=L.
$$
Then
$$
\lim_{n \to \infty} \frac {a_1+a_2+\ldots+a_n} {b_1+b_2+\ldots+b_n}=L.
$$
\end{lem}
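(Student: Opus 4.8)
The statement is a standard weighted Ces\`aro (Toeplitz/Stolz) averaging lemma, so the plan is a direct $\epsilon$--$N$ estimate rather than any appeal to deeper machinery. The idea is to split each partial sum at the index past which the ratios $a_n/b_n$ are uniformly close to $L$, treat the initial segment as a fixed constant, and let the divergence $\sum b_n=\infty$ absorb it.

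First I would fix $\epsilon>0$ and, using $\lim_{n\to\infty} a_n/b_n = L$, choose $N$ so that $|a_n/b_n - L|<\epsilon$ for all $n>N$. Because every $b_n$ is positive, this multiplies up to the additive two-sided bound $|a_n - L b_n| \le \epsilon\, b_n$ for $n>N$. Summing this over $k=N+1,\dots,n$ gives $\bigl|\sum_{k=N+1}^n a_k - L\sum_{k=N+1}^n b_k\bigr| \le \epsilon \sum_{k=N+1}^n b_k \le \epsilon \sum_{k=1}^n b_k$. Writing $C_N := \sum_{k=1}^N a_k - L\sum_{k=1}^N b_k$ for the (fixed) head contribution, I would then estimate, for $n>N$,
$$
\left| \frac{a_1+\cdots+a_n}{b_1+\cdots+b_n} - L \right| = \frac{\bigl| C_N + \bigl(\sum_{k=N+1}^n a_k - L\sum_{k=N+1}^n b_k\bigr)\bigr|}{\sum_{k=1}^n b_k} \le \frac{|C_N|}{\sum_{k=1}^n b_k} + \epsilon.
$$

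Since $\sum_{n=1}^\infty b_n = \infty$, the denominator $\sum_{k=1}^n b_k$ tends to infinity, so the first term vanishes as $n\to\infty$; hence $\limsup_{n\to\infty}\bigl| (a_1+\cdots+a_n)/(b_1+\cdots+b_n) - L\bigr| \le \epsilon$. As $\epsilon>0$ was arbitrary, the limit equals $L$, as claimed. There is no real obstacle here: the only two facts that must be invoked are the positivity of the $b_n$ (to pass from the ratio bound to the additive bound $|a_n - L b_n|\le \epsilon b_n$) and the divergence of $\sum b_n$ (to kill the fixed head $C_N$). One could alternatively deduce the lemma as a special case of \reft{RajWeight} with a suitable choice of weights, but the direct estimate above is shorter and self-contained.
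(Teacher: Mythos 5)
Your proof is correct and complete: positivity of the $b_n$ turns the ratio bound into the additive bound $|a_k - Lb_k|\le \epsilon b_k$, the triangle inequality controls the tail sum, and divergence of $\sum b_n$ annihilates the fixed head term $C_N$, which is exactly the standard Toeplitz/Stolz--Ces\`aro argument. For comparison, the paper offers no proof at all --- it states this as a ``basic lemma'' and uses it without justification --- so your $\epsilon$--$N$ argument simply supplies the routine verification the authors left implicit; your closing remark that it could instead be deduced from Theorem~\ref{thm:RajWeight} is plausible but would require choosing weights and checking the monotonicity hypothesis, so the direct estimate is indeed the better route.
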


\begin{proof}[Proof of \reft{main}]
Let $P$ be a basic sequence that is infinite in limit and k-divergent. The proof of the statement for when $P$ is fully divergent will follow similarly.
Let $y$ be a real number that is  $P$-distribution normal such that $ny$ is $P$-normal for all natural numbers $n$. Note that such a $y$ exists by \reft{measure}. 

Define the sequence $\{\ell_n\}$ as follows:
\begin{align*}
&\ell_1 = 1; \\
& \ell_{i} = \min \br{t : {\sum_{j=1}^i N_n^P(1,y)} < \frac{n}{i} ,\forall n \geq t }.
\end{align*}
Define $M(n) = \min\br {c : l_c < n }$.
We have that $M(n)$ tends to infinity since 
$$
\frac{\sum_{j=1}^i N_n^P(1,y)}{\sum_{j=1}^i P_n^{(1)}} \to 1
$$
and $P$ is infinite in limit.
Furthermore, for any $i \leq M(N)$ and $n \geq N$ we have that $\frac{\sum_{j=1}^{M(N)} N_n^P(1,y)}{n} < \frac{1}{M(N)}$.

Construct $Q=(q_n)$ as follows:
If there is a $i \in \br{1 , \cdots , M(n)}$ such that $E_{P,n}(\pi_i(y)) = 1$, then set $q_n = M(n) p_n$ and $q_{n+1} = M(n) p_{n+1}$. Otherwise, set $q_n = p_n$. If both $E_{P,n}(\pi_i(y)) = 1$ and $E_{P,n-1}(\pi_j(y)) =1$ for some $i, j \leq M(n)$, put $q_n = M(n) p_n$.
Put $x = \ppq(y)$. Then at position $n$, we have that $E_{Q,n}(\pi_m(x)) \neq 1$ when $m \in \{2, 3,\cdots,M(n)\}$. Note that for $E_{Q,n}(\pi_m(x)) =1$, we must have that 
\begin{align*}
\frac{m E_{P,n}(y)}{q_n} + \frac{m E_{P,n+1}(y)}{q_n q_{n+1}} + \cdots &\in \left[\frac{1}{q_n}, \frac{2}{q_n}\right) \hbox{ or }\\
\frac{m E_{P,n}(y)}{ M(n) p_n} + \frac{m E_{P,n+1}(y)}{M(n)^2 p_n p_{n+1}} + \cdots &\in \left[ \frac{1}{M(n) p_n}, \frac{2}{M(n) p_n}\right).
\end{align*} 
But for this to happen, we must have that $E_{P,n}(y) = 0$, and $E_{P,i}(y) = p_{i}-1$ for all $i> n$. 
However this can not happen since $y=E_{P,0}(y).E_{P,1}(y)E_{P,2}(y)\cdots\wrt{P}$ is the $P$-Cantor series expansion of $y$.
Thus we must have that $E_{Q,n}(\pi_m(x)) \neq 1$.
Then for all  $m \in \mathbb{N}_2$, we have that $\lim_{n \to \infty} N_n^Q(1,\pi_m(x))$ is finite. Therefore $\pi_m(x)$ is not $Q$-normal of any order for all $m \in \mathbb{N}_2$.

Let $A \subseteq \mathbb{N}$ be the set of indices where $q_n \neq p_n$. Since 
$$
\frac{\sum_{j=1}^{M(N)} N_n^P(1,y)}{n} < \frac{1}{M(N)},
$$
we have that this set has density zero.
Note that $\pr{\frac{E_{Q,i}(x)}{q_i}}$ differs from $\pr{\frac{E_{P,i}(x)}{p_i}}$ if and only if $n \in A$. Since $\pr{\frac{E_{P,i}(y)}{p_i}}$ is uniformly distributed mod 1, we have that $\pr{\frac{E_{Q,i}(x)}{q_i}}$ is uniformly distributed mod 1 by \refl{DiscKuN}. Since $Q$ is infinite in limit, we have that $x$ is $Q$-distribution normal.

Fix some $j \in \{1,2,\cdots,k\}$. Note that $$\sum_{i=1}^n \frac{\chi_{\mathbb{N}\backslash A}(i)}{p_i \cdots p_{i+j-1}} \leq Q_n^{(j)} \leq P_n^{(j)}.$$
Using the notation of \reft{RajWeight}, set $a_n= \chi_A(n)$, $w_n = \frac{1}{p_i \cdots p_{i+j-1}}$, and $s_n = 1$. Since $P$ is non-decreasing and $k$-divergent, we can apply \reft{RajWeight}. Thus  we have
$$
\limsup_{n \to \infty} \frac{\sum_{i=1}^n \frac{\chi_A(n)}{p_i \cdots p_{i+j-1}}}{P_n^{(j)}} = \limsup_{n \to \infty} \frac{\# A \cap \{1, \cdots, n\}}{n} = 0,
$$
so $\lim_{n \to \infty} \frac{Q_n^{(j)}}{P_n^{(j)}} = 1.$
Since $N_n^P(B,y) = N_n^Q(B,x)$, we have that $x$ is $Q$-normal of orders $1, 2, \cdots, k$. 
\end{proof}

\section{Further problems}
\reft{main} suggests some natural further investigations.
\begin{problem}
Can the condition that $Q$ is monotone be weakened or replaced by other conditions in \reft{main}?
\end{problem}

We consider the following two conditions on a basic sequence $Q$ and a real number~$x$.
\begin{align}\labeq{firstcondition}
&x \in \DNQ \cap \bigcap_{j=1}^k \Nk{Q}{j} &\hbox{ and }nx \notin \bigcup_{j=1}^\infty \Nk{Q}{j} \ \forall n \in \mathbb{N}_2;\\
\labeq{secondcondition}
&x \in \DNQ \cap \NQ &\hbox{ and }nx \notin \bigcup_{j=1}^\infty \Nk{Q}{j} \ \forall n \in \mathbb{N}_2.
\end{align}
\begin{problem}
Is it true that for all $Q$ that are $k$-divergent and infinite in limit that there exists a real number $x$ satisfying \refeq{firstcondition}?  If $Q$ is fully divergent and infinite in limit must there exist an $x$ that satisfies \refeq{secondcondition}?  If not, what must we assume about $Q$?
\end{problem}
We note that the use of \reft{measure} in the proof of \reft{main} means we have not given any explicit examples of the basic sequence or real number $x$ mentioned in \reft{main}.  
There exist some basic sequences $Q$ where the set $\DNQ$ does not contain any computable real numbers.  See \cite{BerosBerosComputable}.  
Thus, it is reasonable to ask the following question.
\begin{problem}
Give an example of a computable basic sequence and a computable real number $x$ that satisfies the conditions \refeq{firstcondition} or \refeq{secondcondition}.  Can this be done for every computable basic sequence $Q$?
\end{problem}
The authors strongly believe that sets of numbers that satisfy conditions \refeq{firstcondition} or \refeq{secondcondition} must have full Hausdorff dimension.  Thus we ask
\begin{problem}
For $k$-divergent $Q$ that are infinite in limit, compute
$$
\dimh{\br{x : \hbox{condition \refeq{firstcondition} holds}}}.
$$
If $Q$ is fully divergent and infinite in limit, compute
$$
\dimh{\br{x : \hbox{condition \refeq{secondcondition} holds}}}.
$$
\end{problem}

\bibliographystyle{amsplain}


\begin{thebibliography}{10}

\bibitem{AgafonovNormal}
V.~N. Agafonov, \emph{Normal sequences and finite automata}, Dokl. Akad. Nauk
  SSSR \textbf{179} (1968), 255--256.

\bibitem{AireyManceHDDifference}
D.~Airey and B.~Mance, \emph{On the {H}ausdorff dimension of some sets of
  numbers defined through the digits of their {$Q$}-{C}antor series
  expansions}, arXiv$:$1407.0776.

\bibitem{AireyManceVandehey}
D.~Airey, B.~Mance, and J.~Vandehey, \emph{Normality preserving operations for
  {C}antor series expansions and associated fractals part {II}},
  arXiv$:$1407.0778.

\bibitem{AistleitnerNormalPreserves}
C.~Aistleitner, \emph{On modifying normal numbers}, Unif. Distrib. Theory
  \textbf{6} (2011), no.~2, 49--58.

\bibitem{BerosBerosComputable}
A.~A. Beros and K.~A. Beros, \emph{Normal numbers and limit computable {C}antor
  series}, arXiv$:$1404.2178.

\bibitem{Cantor}
G.~Cantor, \emph{\"{U}ber die einfachen {Z}ahlensysteme}, Zeitschrift f\"{u}r
  Math. und Physik \textbf{14} (1869), 121--128.

\bibitem{ChangNormal}
K.~T. Chang, \emph{A note on normal numbers}, Nanta Math. \textbf{9} (1976),
  70--72.

\bibitem{LutzNormalityPreserves}
D.~Doty, J.~H. Lutz, and S.~Nandakumar, \emph{Finite-state dimension and real
  arithmetic}, Inform. and Comput. \textbf{205} (2007), 1640--1651.

\bibitem{ErdosRenyiConvergent}
P.~Erd\H{o}s and A.~R\'{e}nyi, \emph{On {C}antor's series with convergent $\sum
  1/q_n$}, Annales Universitatis L. E\"{o}tv\"{o}s de Budapest, Sect. Math.
  (1959), 93--109.

\bibitem{ErdosRenyiFurther}
P.~Erd\H{o}s and A.~R\'enyi, \emph{Some further statistical properties of the
  digits in {C}antor's series}, Acta Math. Acad. Sci. Hungar \textbf{10}
  (1959), 21--29.

\bibitem{FengWenWu}
D.~Feng, Z.~Wen, and J.~Wu, \emph{Some dimensional results for homogeneous
  {M}oran sets}, Sci. China Ser. A \textbf{40} (1997), no.~5, 475--482.

\bibitem{FurstenbergDisjoint}
H.~Furstenberg, \emph{Disjointness in ergodic theory, minimal sets, and a
  problem in {D}iophantine approximation}, Math. Systems Theory \textbf{1}
  (1967), 1--49.

\bibitem{Galambos}
J.~Galambos, \emph{Representations of real numbers by infinite series}, Lecture
  Notes in Math., vol. 502, Springer-Verlag, Berlin, Hiedelberg, New York,
  1976.

\bibitem{Kamae}
T.~Kamae, \emph{Subsequences of normal sequences}, Israel J. Math \textbf{16}
  (1973), 121--149.

\bibitem{KamaeWeiss}
T.~Kamae and B.~Weiss, \emph{Normal numbers and selection rules}, Israel J.
  Math \textbf{21} (1975), 101--110.

\bibitem{KuN}
L.~Kuipers and H.~Niederreiter, \emph{Uniform distribution of sequences},
  Dover, Mineola, NY, 2006.

\bibitem{ppq2}
B.~Li and B.~Mance, \emph{Number theoretic applications of a class of {C}antor
  series fractal functions part {II}}, To appear in Int. J. Number Theory
  (2015).

\bibitem{ppq1}
B.~Mance, \emph{Number theoretic applications of a class of {C}antor series
  fractal functions part {I}}, To appear in Acta Math. Hungar. (2014).

\bibitem{Mance4}
\bysame, \emph{Typicality of normal numbers with respect to the {C}antor series
  expansion}, New York J. Math. \textbf{17} (2011), 601--617.

\bibitem{X}
Ch. Mauduit, \emph{Problem session dedicated to {G}\'{e}rard {R}auzy},
  Dynamical systems (Luminy--Marseille, 1998), World Scientific Publishing,
  River Edge, NJ, 2000.

\bibitem{MerkleReimann}
W.~Merkle and J.~Reimann, \emph{Selection functions that do not preserve
  normality}, Theory Comput. Syst. \textbf{39} (2006), no.~5, 685--697.

\bibitem{RajagopalWeightedDensity}
C.~T. Rajagopal, \emph{Some limit theorems}, Amer. J. Math. \textbf{70} (1948),
  157--166.

\bibitem{RenyiProbability}
A.~R\'{e}nyi, \emph{On a new axiomatic theory of probability}, Acta Math. Acad.
  Sci. Hungar. \textbf{6} (1955), 329--332.

\bibitem{Renyi}
\bysame, \emph{On the distribution of the digits in {C}antor's series}, Mat.
  Lapok \textbf{7} (1956), 77--100.

\bibitem{RenyiSurvey}
\bysame, \emph{Probabilistic methods in number theory}, Shuxue Jinzhan
  \textbf{4} (1958), 465--510.

\bibitem{SchweigerCantor}
F.~Schweiger, \emph{\"{U}ber den {S}atz von {B}orel-{R}\'{e}nyi in der
  {T}heorie der {C}antorschen {R}eihen}, Monatsh. Math. \textbf{74} (1969),
  150--153.

\bibitem{Turan}
P.~Tur\'{a}n, \emph{On the distribution of ``digits'' in {C}antor systems},
  Mat. Lapok \textbf{7} (1956), 71--76.

\bibitem{Salat4}
T.~\u{S}al\'at, \emph{\"{U}ber die {C}antorschen {R}eihen}, Czech. Math. J.
  \textbf{18 (93)} (1968), 25--56.

\bibitem{Wall}
D.~D. Wall, \emph{Normal numbers}, Ph.D. thesis, Univ. of California, Berkeley,
  Berkeley, California, 1949.

\bibitem{Weyl2}
H.~Weyl, \emph{\"{U}ber ein {P}roblem aus dem {G}ebiete der diophantischen
  {A}pproximationen}, Nachr. Ges. Wiss. G\"{o}ttingen, Math.-phys. \textbf{K1}
  (1914), 234--244.

\bibitem{Weyl4}
\bysame, \emph{\"{U}ber die {G}leichverteilung von {Z}ahlen mod. {E}ins}, Math.
  Ann. \textbf{77} (1916), 313--352.

\end{thebibliography}
\providecommand{\bysame}{\leavevmode\hbox to3em{\hrulefill}\thinspace}
\providecommand{\MR}{\relax\ifhmode\unskip\space\fi MR }
\providecommand{\MRhref}[2]{%
  \href{http://www.ams.org/mathscinet-getitem?mr=#1}{#2}
}
\providecommand{\href}[2]{#2}

\end{document}